\documentclass[11pt, oneside]{article}   	
\usepackage{geometry}                		
\usepackage{geometry, amsmath, amsfonts, amssymb}                		
\usepackage{url}
\geometry{letterpaper}                   		
\usepackage[parfill]{parskip}    		
\usepackage{graphicx}				
\usepackage{amssymb}
\usepackage{fancyhdr}
\usepackage{leftidx}
\usepackage{amsthm}
\usepackage{mathrsfs}
\usepackage{theoremref}
\usepackage{longtable}
\usepackage{amsmath}
\usepackage{graphicx}
\usepackage{placeins}
\pagestyle{fancy}
\lhead{}
\rhead{Andy Loo}

\title{Union-Free Families of Subsets}
\author{Andy Loo\\aloo@princeton.edu\\Department of Mathematics, Princeton University}


\def\bm{\begin{bmatrix}}
\def\endm{\end{bmatrix}}

\newcommand{\ceil}[1]{\lceil#1\rceil}

\def\hat{\widehat}

\theoremstyle{note}

\newtheorem{thm}{Theorem}[section]
\newtheorem{lem}[thm]{Lemma}
\newtheorem{defn}[thm]{Definition}
\newtheorem{conj}[thm]{Conjecture}
\newtheorem{eg}[thm]{Example}

\makeatletter
\def\thm@space@setup{%
  \thm@preskip=\parskip \thm@postskip=0pt
}
\makeatother

\begin{document}
\maketitle

\begin{abstract}
This paper discusses the question of how many non-empty subsets of the set $[n] = \{ 1, 2, ..., n\}$  we can choose so that no chosen subset is the union of some other chosen subsets. Let $M(n)$ be the maximum number of subsets we can choose. 
We construct a series of such families, which leads to lower bounds on $M(n)$. We also give upper bounds on $M(n)$. Finally, we propose several conjectures on the tightness of our lower bound for $M(n)$. 
\end{abstract}

\section*{Keywords}

Subset, union, antichain, LYM inequality, Sperner's theorem

\section{Introduction}

Suppose a legislator wishes to filibuster against a bill by proposing a large number of amendments to it. 
The bill consists of $n$ articles, where $n$ is a positive integer. 
Suppose each amendment must be in the form of  repealing one or more articles of the bill, and that if two or more amendments are passed, the effect is to repeal  the union of the subsets of articles that the passed amendments seek to repeal. 
At most how many amendments can the legislator propose without any amendment being superfluous (and hence ruled out of order by the chair)? 

Here, an amendment is said to be superfluous if its removal does not alter the range of possible effects that can be achieved by passing various combinations of amendments.

A different question in this context is at most how many amendments can be proposed such that no two different \emph{collections} of amendments would achieve the same effect. If there are $N$ amendments, there are $2^N$ collections of amendments (including the empty collection). 
We require that each collection of amendments repeal a different set of articles. 
As there are only $2^n$ sets of articles, we must have $2^N \leq 2^n$ and hence $N \leq n$. Indeed, one can propose $n$ distinct amendments where each consists of repealing exactly one article. Therefore the answer to this question is $n$.
However, this is not the question that we will discuss in this paper. Instead, we will consider the formulation stated in the preceding paragraphs and require that no \emph{single} amendment be superfluous in its own right. 

\section{Preliminaries}

We define the following: 
\begin{itemize} 
\item For any positive integer $n$, let $[n] = \{ 1, 2, ..., n\}$.
\item Let $[a, b]  =\{ a, a+1, ..., b\}$ for integers $a \leq b$, and $[a, b] = \emptyset$ for $a > b$.
\item For any set $S$, let $|S|$ denote the cardinality of $S$. 
\item For any set $S$, let $P(S)$ be the power set of $S$. 
\item For any set $S$ and integer $k$, let $\binom{S}{k}$ be the family of subsets of $S$ of size $k$. 
\item For any family $F$ of {non-empty} subsets of $[n]$, let 
\[ U(F) = \left \{  \bigcup_{A \in I} A : I \subseteq F  \right \}.\]
i.e. let $U(F)$ be the collection of unions of subsets of $[n]$ that $F$ contains. 
\item A finite family $F$ of sets is said to be \emph{union-free} if there does not exist any set $A \in F$ that is the union of some other sets $A_1, A_2, ..., A_k \in F$. 
Also, for any set $S$, let $UF(S)$ denote the collection of all union-free families of subsets (possibly including the empty set) of $S$, and let $M(n)$ be the maximum size of a union-free family of \emph{non-empty} subsets of $S$. 
\item For any two families $F_1, F_2$ of sets, define 
\[ F_1 \oplus F_2 = \{ A_1 \cup A_2 : A_1 \in F_1, A_2 \in F_2 \}.\]

\end{itemize} 

\section{An Equivalent Interpretation}

In the filibuster problem for a bill with $n$ articles, each amendment corresponds to a \emph{non-empty} subset of $[n]$. 
The question is to find the maximum size of a family $F$ of subsets of $[n]$ such that there does not exist any subset $A \in F$ for which 
\[ U(F) = U(F \setminus \{ A\}).\]

\begin{thm}
For any family $F$ of non-empty subsets of $[n]$, the following conditions are equivalent: 

(i) There does not exist any subset $A \in F$ for which $U(F) = U(F \setminus \{ A\})$.

(ii) $F$ is union-free.
\end{thm}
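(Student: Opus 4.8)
The plan is to prove the equivalence through its two contrapositives, namely $\neg(\text{ii}) \Rightarrow \neg(\text{i})$ and $\neg(\text{i}) \Rightarrow \neg(\text{ii})$. Before treating either, I would record the one inclusion that holds unconditionally: for any $A \in F$ we always have $U(F \setminus \{A\}) \subseteq U(F)$, since every union indexed by a subfamily of $F \setminus \{A\}$ is \emph{a fortiori} indexed by a subfamily of $F$. Thus in each direction the real content is to establish, or to deny, the reverse inclusion $U(F) \subseteq U(F \setminus \{A\})$ for the appropriate $A$.

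For $\neg(\text{ii}) \Rightarrow \neg(\text{i})$, suppose $F$ is not union-free, so some $A \in F$ satisfies $A = A_1 \cup \cdots \cup A_k$ with each $A_i \in F$ and $A_i \neq A$. I would show $U(F) = U(F \setminus \{A\})$ by a substitution argument. Given any $\bigcup_{B \in I} B \in U(F)$ with $I \subseteq F$, if $A \notin I$ the union already lies in $U(F \setminus \{A\})$; if $A \in I$, I replace $A$ by $A_1, \dots, A_k$, passing to the index set $(I \setminus \{A\}) \cup \{A_1, \dots, A_k\} \subseteq F \setminus \{A\}$, which produces exactly the same union precisely because $A = A_1 \cup \cdots \cup A_k$. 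Hence $U(F) \subseteq U(F \setminus \{A\})$, equality holds, and (i) fails.

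For $\neg(\text{i}) \Rightarrow \neg(\text{ii})$, suppose some $A \in F$ satisfies $U(F) = U(F \setminus \{A\})$. Since $A \in F$ we have $A \in U(F)$ by taking the singleton index set $\{A\}$, so by the assumed equality $A \in U(F \setminus \{A\})$; that is, $A = \bigcup_{B \in I} B$ for some $I \subseteq F \setminus \{A\}$. Each such $B$ is a member of $F$ different from $A$, so this exhibits $A$ as a union of other members of $F$, contradicting union-freeness — provided the index set $I$ is nonempty. This nonemptiness is the one point requiring care, and it is exactly where the hypothesis that $F$ consists of non-empty subsets enters: were $I$ empty, the union would be $\emptyset$, forcing $A = \emptyset$, which is impossible. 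I expect this handling of the empty-union convention to be the only subtle step; the remainder is the bookkeeping of the two inclusions above.
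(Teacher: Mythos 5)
Your proof is correct and follows essentially the same route as the paper's: the $\neg(\mathrm{ii}) \Rightarrow \neg(\mathrm{i})$ direction by substituting $A_1, \dots, A_k$ for $A$ inside an arbitrary union, and the $\neg(\mathrm{i}) \Rightarrow \neg(\mathrm{ii})$ direction by observing $A \in U(F) = U(F \setminus \{A\})$. Your explicit handling of the empty index set (where the hypothesis that the members of $F$ are non-empty is used) is a point the paper's proof passes over silently, and is a welcome bit of extra care rather than a different argument.
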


\begin{proof}
If condition (ii) does not hold, i.e. if a subset $A \in F$ is the union of some other subsets $A_1, A_2, ..., A_k \in F$, then for any $G \subseteq F$, we have 
\[ A \cup \bigcup_{B \in G} B = \bigcup_{i=1}^k A_i \cup \bigcup_{B \in G} B \in U(F \setminus \{ A\}).\]
Hence $U(F) = U(F \setminus \{ A\})$, violating condition (i). So we have that condition  (i) implies condition (ii).

Conversely, if condition (i) does not hold, i.e. if there exists a subset $A \in F$ for which $U(F) = U(F \setminus \{A\})$, then since $A \in U(F)$, we have $A \in U(F \setminus \{ A\})$, so there exists some subsets $A_1, A_2, ..., A_k \in F\setminus \{ A\}$ such that $A = \bigcup_{i=1}^k A_i$, violating condition (ii). Therefore condition (ii) implies condition (i). 
\end{proof}

Thus, the question is reduced to finding the maximum size of a union-free family of subsets of $[n]$. 

\section{Antichains} 

If $A = \bigcup_{i=1}^k A_i$, then $A_i \subseteq A$ for $i = 1, 2, ..., k$. So in order to have a union-free family $F$ of subsets, a sufficient condition is that there do not exist two distinct  subsets  $A, B$ in the family $F$  with $B \subseteq A$, i.e. $F$ is an antichain. 

We recall the following results: 

\begin{thm} [LYM inequality\footnote{Lubell-Yamamoto-Meshalkin inequality.}]
For any antichain $F \subseteq P([n])$, the following inequality holds:  
\[ \sum_{A \in F} \frac{1}{\binom{n}{|A|}} \leq 1.\]
\end{thm}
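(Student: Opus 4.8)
The plan is to prove the LYM inequality via the classical counting argument over maximal chains in the Boolean lattice $P([n])$. The key object is the set of maximal chains, i.e. sequences $\emptyset = C_0 \subset C_1 \subset \cdots \subset C_n = [n]$ where $|C_i| = i$. Each such chain corresponds bijectively to a permutation of $[n]$ (the order in which elements are added), so there are exactly $n!$ maximal chains. I would fix an antichain $F$ and count pairs $(A, C)$ where $A \in F$ lies on the maximal chain $C$.

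First I would establish the count from the chain side. For a fixed subset $A$ with $|A| = k$, the number of maximal chains passing through $A$ is $k!(n-k)!$: there are $k!$ ways to build up $A$ from $\emptyset$ and $(n-k)!$ ways to extend $A$ to $[n]$. Summing over all $A \in F$ gives the total number of incident pairs as $\sum_{A \in F} |A|!\,(n - |A|)!$.

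Next I would bound this from the antichain side. The crucial observation is that a single maximal chain can contain at most one member of $F$: if $A, B \in F$ both lay on the same chain, then one would be a subset of the other (since any two sets on a chain are comparable), contradicting the antichain property. Hence the number of incident pairs is at most the total number of chains, namely $n!$. Combining the two counts yields $\sum_{A \in F} |A|!\,(n-|A|)! \leq n!$, and dividing through by $n!$ gives $\sum_{A \in F} \binom{n}{|A|}^{-1} \leq 1$, as desired.

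I do not anticipate a serious obstacle here, since this is a well-known double-counting argument; the only points requiring care are the exact factorial count $k!(n-k)!$ for chains through a fixed $k$-set and the clean justification that comparability within a chain forces the at-most-one-per-chain bound. An alternative route would be the probabilistic phrasing of Lubell: choose a uniformly random maximal chain and let $X_A$ indicate that $A$ lies on it, so $\mathbb{P}(A \in C) = \binom{n}{|A|}^{-1}$; since the events $\{A \in C\}$ for $A \in F$ are disjoint, their probabilities sum to at most $1$. Both formulations are essentially the same computation, and I would present the double-counting version for concreteness.
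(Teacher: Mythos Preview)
Your argument is the standard Lubell double-counting over maximal chains and is fully correct: the count $k!(n-k)!$ for chains through a fixed $k$-set is right, and the antichain hypothesis ensures each chain meets $F$ at most once, yielding $\sum_{A\in F}|A|!\,(n-|A|)!\le n!$ and hence the inequality.

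There is nothing to compare against, however: the paper does not supply a proof of the LYM inequality. It is stated in the Antichains section under the heading ``We recall the following results'' and is simply cited as background, with no argument given. So your proposal is not an alternative route but rather a proof where the paper offers none.
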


\begin{thm}[Sperner]
When $n$ is even, the only largest antichain of $[n]$ is 
\[ \binom{[n]}{n/2}.\]
When $n$ is odd, the only largest antichains of $[n]$ are 
\[ \binom{[n]}{\ceil{n/2}} \text{ and } \binom{[n]}{\lfloor n/2 \rfloor}.\]
\end{thm}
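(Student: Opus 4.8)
The plan is to read the cardinality bound straight off the LYM inequality, and then recover the uniqueness of the extremal families from the equality case of LYM, supplemented by a shadow/expansion argument in the odd case.

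First I would establish the size bound. The binomial coefficients $\binom{n}{k}$ are maximized precisely at $k = \floor{n/2}$ (and also at $k = \ceil{n/2}$ when $n$ is odd), so every $A \sub [n]$ satisfies $\binom{n}{|A|} \leq \binom{n}{\floor{n/2}}$. Hence for any antichain $F$,
\[ \frac{|F|}{\binom{n}{\floor{n/2}}} \leq \sum_{A \in F} \frac{1}{\binom{n}{|A|}} \leq 1, \]
so $|F| \leq \binom{n}{\floor{n/2}}$. Since the middle layer(s) are themselves antichains attaining this value, the maximum antichain size equals $\binom{n}{\floor{n/2}}$.

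Now suppose $F$ is an antichain with $|F| = \binom{n}{\floor{n/2}}$. The displayed chain must be tight, forcing $\binom{n}{|A|} = \binom{n}{\floor{n/2}}$ for every $A \in F$. When $n$ is even this pins down $|A| = n/2$ for all $A$; as $F$ then consists of $\binom{n}{n/2}$ distinct sets drawn from $\binom{[n]}{n/2}$, we conclude $F = \binom{[n]}{n/2}$, settling the even case. When $n = 2m+1$ is odd, equality only forces $|A| \in \{m, m+1\}$, so I would write $F = F_m \cup F_{m+1}$ with $F_m \sub \binom{[n]}{m}$ and $F_{m+1} \sub \binom{[n]}{m+1}$. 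Letting $\nabla F_m$ denote the upper shadow (the $(m+1)$-subsets containing some member of $F_m$), the antichain property gives $F_{m+1} \cap \nabla F_m = \es$, so counting inside $\binom{[n]}{m+1}$ yields $|\nabla F_m| \leq \binom{n}{m+1} - |F_{m+1}| = \binom{n}{m} - |F_{m+1}| = |F_m|$. I would then consider the bipartite containment graph $G$ between $\binom{[n]}{m}$ and $\binom{[n]}{m+1}$: each $m$-set lies in $n-m = m+1$ sets above it and each $(m+1)$-set contains $m+1$ sets below it, so $G$ is $(m+1)$-regular with equal-sized parts. A double count of the edges incident to $F_m$ gives $(m+1)|\nabla F_m| \geq (m+1)|F_m|$, hence $|\nabla F_m| = |F_m|$; equality forces every edge incident to $\nabla F_m$ to land in $F_m$, so by connectivity of $G$ the set $F_m$ is either empty or all of $\binom{[n]}{m}$. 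Either way $F$ collapses to a single full layer, giving exactly the two extremal families.

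The main obstacle I anticipate is the odd-case uniqueness, and within it the two structural facts about $G$: that it is regular (a routine degree count) and, more essentially, that it is connected. For connectivity I would argue that any two $m$-subsets can be transformed into one another by repeatedly swapping a single element, each swap realized by passing up to a common $(m+1)$-superset and back down; this is what upgrades the bare inequality $|\nabla F_m| \geq |F_m|$ into the rigidity $F_m \in \{\es, \binom{[n]}{m}\}$ that the uniqueness claim requires.
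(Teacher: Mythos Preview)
The paper does not actually prove Sperner's theorem: it is stated in Section~4 under ``We recall the following results'' with no accompanying proof, so there is nothing to compare your argument against on the paper's side.

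That said, your proposal is correct and is essentially the standard LYM-based proof. The size bound and the even-case uniqueness are immediate from equality in LYM. In the odd case your shadow argument is sound: with $n=2m+1$ you correctly use $\binom{n}{m+1}=\binom{n}{m}$ to get $|\nabla F_m|\le |F_m|$, the double count in the $(m+1)$-regular bipartite containment graph gives the reverse inequality, and equality forces every $m$-subset of any member of $\nabla F_m$ to lie in $F_m$, so connectivity of the graph (via single-element swaps) pins $F_m$ to $\emptyset$ or the full layer. One small wording point: when you write ``every edge incident to $\nabla F_m$ lands in $F_m$'' you mean every edge from below incident to $\nabla F_m$ has its lower endpoint in $F_m$; this is exactly what the equality $(m+1)|\nabla F_m|=(m+1)|F_m|$ yields, and it is all you need.
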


\section{An Improvement}

In the previous section we showed that $\binom{[n]}{\ceil{n/2}}$ is one of the largest antichains in $[n]$.
It is possible to add subsets to this antichain so that the resulting family is still union-free. 

\begin{defn}
For  positive integers $n \geq m_1 > m_2 > \cdots > m_l$, let 
$q(n; m_1; m_2; ...; m_l)$ be the family 
\[\binom{[n]}{m_1} \cup \binom{[m_1 - 1]}{m_2} \cup \cdots  \cup \binom{[m_{l-1} - 1]}{m_l}.\]
\end{defn}

\begin{defn}
For  positive integers $n$, let 
\[ Q(n) = \left \{ q(n; m_1; m_2; ...; m_l) : n \geq m_1 > m_2 > \cdots  > m_{l-1} > m_l = 1 \right \}.\]
\end{defn}

For example, 
\[ q(20; 15; 6; 1) = \binom{[20]}{15} \cup \binom{[14]}{6} \cup \binom{[5]}{1} \in Q(20).\]

In particular, we define the following: 

\begin{defn}
For  positive integers $n$, let 
\[ q(n) = q(n; m_1; m_2; ...; m_l) ,\]
where $m_1 = \ceil{n/2}$, $m_j = \ceil{\frac{m_{j-1}}{2}}$ for  $j = 2, ..., l$, and $m_l = 1$. 
\end{defn}

For example, 
\[ q(20) = \binom{[20]}{10} \cup \binom{[9]}{5} \cup \binom{[4]}{2} \cup \binom {[1]}{1}.\]
Note that the maximal antichain $\binom{[n]}{\ceil{n/2}}$ is contained in $q(n)$. 

\begin{thm}
For any positive integer $n$, any family in $Q(n)$ is union-free.
\end{thm}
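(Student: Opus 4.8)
The plan is to exploit two monotonicity properties of the layered construction simultaneously: as we descend through the blocks the prescribed sizes decrease, while the ground sets on which the blocks live also shrink. Writing an arbitrary member $q(n; m_1; \ldots; m_l)$ of $Q(n)$ as a disjoint union of layers $L_1 = \binom{[n]}{m_1}$ and $L_j = \binom{[m_{j-1}-1]}{m_j}$ for $2 \le j \le l$, I would first record the two facts that drive the whole argument: (a) every member of $L_j$ has size exactly $m_j$, and these sizes satisfy $m_1 > m_2 > \cdots > m_l$; and (b) every member of $L_j$ is a subset of the ground set $[m_{j-1}-1]$, where for uniformity I interpret $m_0 - 1 = n$ so that the statement also covers the top layer $L_1$.

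Next I would argue by contradiction. Suppose some $A \in L_j$ were the union $A = A_1 \cup \cdots \cup A_k$ of members $A_1, \ldots, A_k$ of the family, each distinct from $A$. Since each $A_i \subseteq A$, we have $|A_i| \le |A| = m_j$. The first step is to show that every $A_i$ must come from a \emph{strictly lower} layer, i.e.\ $A_i \in L_{j'}$ with $j' > j$. Indeed, if $A_i \in L_{j'}$, then $|A_i| = m_{j'} \le m_j$ forces $j' \ge j$ by the strict monotonicity in (a); and $j' = j$ would give $A_i \subseteq A$ with $|A_i| = |A| = m_j$, hence $A_i = A$, which is excluded.

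The second and decisive step invokes (b). For $j' > j$ we have $j' - 1 \ge j$, so $m_{j'-1} \le m_j$, and therefore $A_i \subseteq [m_{j'-1}-1] \subseteq [m_j - 1]$. Consequently the union satisfies $A = \bigcup_i A_i \subseteq [m_j - 1]$, which gives $|A| \le m_j - 1$, contradicting $|A| = m_j$ from (a). This contradiction shows that no such $A$ exists, so the family is union-free.

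I expect the only substantive point to get right — and the only place where a careless choice of parameters could break the construction — is the pairing of the size parameter $m_j$ of a layer with the ground set $[m_{j-1}-1]$ of the layer immediately above: it is precisely because a set of size $m_j$ cannot fit inside the $(m_j-1)$-element set $[m_j-1]$ that the forced containment in the second step is contradictory. Beyond careful bookkeeping with the index shift $m_0 - 1 = n$, I anticipate no real obstacle; notably the argument never uses the specific values of the $m_i$, only that the sequence strictly decreases, which is exactly the hypothesis in the definition of $Q(n)$.
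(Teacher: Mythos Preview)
Your proof is correct and follows essentially the same contradiction argument as the paper's: both exploit that the layer sizes $m_j$ strictly decrease while the ground sets $[m_{j-1}-1]$ shrink, forcing the union of proper-subset contributors to live inside a set too small to contain $A$. The only cosmetic difference is that you let $j$ index the layer of $A$ and push the $A_i$ strictly below, whereas the paper lets $j$ index the layer of the largest $A_i$ and shows $A$ lies strictly above; the underlying idea and the key inequality are identical.
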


\begin{proof}
For any $F \in Q(n)$, let $F = q(n; m_1; m_2; ...; m_l)$. 
Suppose for the sake of contradiction that there is some $A \in F$ that is the union of some other subsets $A_1, A_2, ..., A_k \in F$. 
We have $\max \{ |A_1|, ..., |A_k| \} = m_j$ for some $1 \leq j \leq l$. 
Then $A_i \subseteq [m_{j-1} - 1]$ for all $i$, and so $\bigcup_{i=1}^k A_i \subseteq [m_{j-1} - 1]$. 
Since $|A_i|< |A|$ for all $i$, we have
$\left | \bigcup_{i=1}^k A_i \right |  \leq  m_{j-1} - 1 < m_{j-1} \leq |A|$, a contradiction. 
\end{proof}

\begin{defn}
A union-free family $F$ of subsets of $[n]$ is said to be \emph{maximal} if it is not a proper sub-family of a union-free family of subsets of $[n]$. 
\end{defn}

Clearly, this is equivalent to saying that no other subset of $[n]$ can be added to $F$ so that the resulting family is still union-free.  

We want to prove that any family in $Q(n)$ is maximal. To do so, we will need a lemma.

\begin{defn}
We say that a family $F$ of subsets of $[n]$ can \emph{augment} a set $S$ to a set $T$ if there exist subsets $A_1, A_2, ..., A_k \in F$ such that $S \cup \bigcup_{i=1}^k A_i = T$. 
\end{defn}

\begin{lem} \thlabel{up down augment}
Given any $F \in Q(n)$, for any integers $s, t$ with $0 \leq s < t \leq n$, $F$ can augment any subset of $[n]$ of size $s$ to some subset of $[n]$ of size $t$.
\end{lem}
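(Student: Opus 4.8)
The plan is to induct on $n$, exploiting the recursive structure of the families in $Q(n)$. Writing $F = q(n; m_1; m_2; \ldots; m_l)$, we have $F = \binom{[n]}{m_1} \cup F'$, where $F' = q(m_1 - 1; m_2; \ldots; m_l)$ is itself a member of $Q(m_1 - 1)$ (legitimate since $m_1 > m_2$ forces $m_1 - 1 \geq m_2$, and $m_l = 1$). Since $m_1 - 1 < n$, strong induction on $n$ is well-founded, with base case $n = 1$ forcing $F = \{\{1\}\}$, which is immediate. Before splitting into cases I would record one elementary fact: if $T \subseteq [n]$ and $|T| \geq m_1$, then $T$ is the union of all of its $m_1$-element subsets, each of which lies in $\binom{[n]}{m_1} \subseteq F$; hence such a $T$ is itself a union of members of $F$ (every element of $T$ sits in some $m_1$-subset of $T$ once $|T| \geq m_1$). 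Now fix $S$ with $|S| = s$ and a target $t$ with $s < t \leq n$.

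Case A ($t \geq m_1$): Choose any $T$ with $S \subseteq T \subseteq [n]$ and $|T| = t$, which is possible because $s < t \leq n$. By the fact above, $T$ is a union of members of $F$, and since $S \subseteq T$ this exhibits $F$ augmenting $S$ to the size-$t$ set $T$.

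Case B ($t < m_1$): Here every member of $\binom{[n]}{m_1}$ is useless, since each has size $m_1 > t$ and so cannot be contained in a set of size $t$; I would therefore work entirely inside $F'$ on the ground set $[m_1 - 1]$. Note $t < m_1$ together with $t \geq 1$ forces $m_1 \geq 2$, so $l \geq 2$ and $F'$ is genuinely defined. Put $S' = S \cap [m_1 - 1]$, $s' = |S'|$, and $b = |S \setminus [m_1 - 1]|$, so that $s = s' + b$. Apply the inductive hypothesis to $F' \in Q(m_1 - 1)$ to augment $S' \subseteq [m_1 - 1]$ to the size $t' := s' + (t - s)$; this is legitimate provided $s' < t' \leq m_1 - 1$, where the left inequality holds since $t - s \geq 1$ and the right one follows from $t' = t - b \leq t \leq m_1 - 1$. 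The induction yields sets $A_1, \ldots, A_k \in F' \subseteq F$ whose union $U := \bigcup_i A_i \subseteq [m_1 - 1]$ satisfies $|S' \cup U| = t'$, i.e. $|U \setminus S'| = t - s$. Since $U \subseteq [m_1 - 1]$ is disjoint from $S \setminus [m_1 - 1]$, we have $U \setminus S = U \setminus S'$, whence $|S \cup U| = b + |S' \cup U| = b + t' = t$. Thus the same sets $A_1, \ldots, A_k$ augment $S$ to a size-$t$ set, closing the induction.

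The conceptual crux, and the step I would be most careful about, is recognizing that the elements of $S$ are \emph{free}: the augmenting sets need only cover the $t - s$ new elements, not the whole of $S$. This is what lets Case B discard all elements of $S$ lying above $m_1 - 1$ (absorbing them into the count $b$) and pass the smaller, legitimate target $t' = t - b$ to the inductive hypothesis; the arithmetic check $t' \leq m_1 - 1$ is exactly where the hypothesis $t < m_1$ is used. The remaining bookkeeping --- the disjointness of $U$ from $S \setminus [m_1 - 1]$, and the verification that $F'$ truly lies in $Q(m_1 - 1)$ --- is routine.
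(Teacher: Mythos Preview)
Your proof is correct and follows the same inductive scheme as the paper's: induct on $n$, decompose $F = \binom{[n]}{m_1} \cup F'$ with $F' \in Q(m_1-1)$, and split on whether $t \geq m_1$ or $t < m_1$. You are in fact more careful than the paper in Case~B: the paper's proof simply invokes the inductive hypothesis for subsets of $[m_1-1]$ without addressing the possibility that $S$ meets $[m_1,n]$, whereas you explicitly strip off $S \setminus [m_1-1]$, pass the reduced target $t' = t - b$ to the induction, and reassemble at the end --- precisely the bookkeeping the paper defers to its next theorem.
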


\begin{proof}
We proceed by induction on $n$. The statement is trivial for $n =1$.

Assume the statement is true for all positive integers less than $n$. Consider the statement for $n$.
Given any $F \in Q(n)$, let $F = q(n; m_1; m_2; ...; m_l)$.
Note that for any integers $s, t$ with $0 \leq s < m_1 \leq t$, $\binom{[n]}{m_1}$ can augment any subset of size $s$ to some subset of size $t$. 
Thus we only require that for any integers $s, t$ with $0 < s < t \leq m_1 - 1$, any family in $Q(m_1 - 1)$ can augment any subset of $[n]$ of size $s$ to some subset of $[n]$ of size $t$. 
This is true because by the inductive hypothesis, any family in $Q(m_1 - 1)$ can augment any subset of $[m_1 - 1]$ of size $s$ to some subset of $[m_1 - 1]$ of size $t$. 
\end{proof}

\begin{thm} 
For any positive integer $n$, any family in $Q(n)$ is a maximal union-free family of subsets of $[n]$.
\end{thm}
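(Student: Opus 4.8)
The plan is to prove maximality by showing that for every non-empty $A \subseteq [n]$ with $A \notin F$, the enlarged family $F \cup \{A\}$ is \emph{not} union-free; combined with the earlier theorem that $F$ itself is union-free, this is exactly the assertion that no further non-empty subset can be adjoined. I would induct on $n$, the case $n = 1$ being immediate because the sole non-empty subset of $[1]$ already lies in $F = \binom{[1]}{1}$. Writing $F = q(n; m_1; \ldots; m_l)$ and setting $a = |A|$, I first dispose of the case $a \geq m_1$: if $a = m_1$ then $A \in \binom{[n]}{m_1} \subseteq F$, a contradiction, while if $a > m_1$ then $A$ is the union of its own $m_1$-element subsets, each lying in $\binom{[n]}{m_1} \subseteq F$ and distinct from $A$, so $F \cup \{A\}$ is not union-free.

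The remaining case $a < m_1$ I would split according to whether $A \subseteq [m_1 - 1]$. When $A \subseteq [m_1 - 1]$, the set $A$ is a non-empty subset of $[m_1 - 1]$ that cannot lie in the top level (wrong size) and hence does not belong to the lower-level family $G := q(m_1 - 1; m_2; \ldots; m_l) \in Q(m_1 - 1)$. Since $m_1 - 1 < n$, the induction hypothesis gives that $G$ is maximal, so $G \cup \{A\}$ is not union-free as a family of subsets of $[m_1 - 1]$; the sets witnessing this all belong to $G \subseteq F$ together with $A$, so they equally witness that $F \cup \{A\}$ is not union-free.

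I expect the main obstacle to be the last case, $a < m_1$ with $A \not\subseteq [m_1 - 1]$, where $A$ contains some element of $\{m_1, \ldots, n\}$. The idea is to inflate $A$ up to a top-level ($m_1$-element) set using only lower-level sets, so that this top-level set is displayed as a union involving $A$. Concretely, I would write $A = A'' \cup A'$ with $A'' = A \cap [m_1, n] \neq \emptyset$, $A' = A \cap [m_1 - 1]$, and $r = |A''| \geq 1$. Because $|A'| = a - r < m_1 - r \leq m_1 - 1$, the augmenting lemma applied to $G$ on ground set $[m_1 - 1]$ yields $G_1, \ldots, G_p \in G$ (with $p \geq 1$, since the target size strictly exceeds $|A'|$) such that $A' \cup G_1 \cup \cdots \cup G_p = C \subseteq [m_1 - 1]$ and $|C| = m_1 - r$. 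Then $B := A \cup G_1 \cup \cdots \cup G_p = A'' \cup C$, and since $A'' \subseteq [m_1, n]$ and $C \subseteq [m_1 - 1]$ are disjoint, $|B| = r + (m_1 - r) = m_1$, so $B \in \binom{[n]}{m_1} \subseteq F$. This exhibits $B$ as the union of $A, G_1, \ldots, G_p$, each of size strictly less than $m_1 = |B|$ and hence distinct from $B$, proving $F \cup \{A\}$ is not union-free. The delicate points are the bookkeeping that $A''$ stays fixed above $m_1 - 1$ while only $A'$ is grown inside $[m_1 - 1]$, and verifying the size inequalities $0 \leq a - r < m_1 - r \leq m_1 - 1$ that let the augmenting lemma land $B$ in precisely the top level.
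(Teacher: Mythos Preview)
Your proof is correct and follows essentially the same approach as the paper's: induction on $n$, with the same three-way split into $|A| \geq m_1$, $|A| < m_1$ with $A \subseteq [m_1-1]$ (handled by the induction hypothesis on $G = q(m_1-1; m_2; \ldots; m_l)$), and $|A| < m_1$ with $A \not\subseteq [m_1-1]$ (handled by the augmenting lemma to grow $A$ to an $m_1$-set in $F$). Your version is in fact a bit more carefully written---you verify the size inequalities $0 \leq a-r < m_1-r \leq m_1-1$ needed to invoke the lemma and confirm that all the sets in the union are strictly smaller than $B$---but the argument is the same.
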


\begin{proof}
We proceed by induction on $n$. The statement is trivial for $n= 1$. 

Assume the statement is true for all positive integers less than $n$. Consider the statement for $n$. 
Given any $F \in [n]$,   let $F = q(n; m_1; m_2; ...; m_l)$.
Contemplate adding a subset $S$ of $[n]$ to $F$, where $S \notin F$. 
If $|S| > m_1$, then $S$ is the union of some of the subsets in $\binom{[n]}{m_1}$. 
Note that $|S| \ne m_1$ because $\binom{[n]}{m_1} \subseteq F$. 
For $|S| < m_1$, if $S \subseteq [m_1 - 1]$, the inductive hypothesis implies that the family after adding $S$ is not union-free. 

So the only case remaining is that $|S| \leq m_1 - 1$ and $S \not \subseteq [m_1 - 1]$. 
Thus
\[ |S \setminus [m_1 - 1] | \geq 1 \text{ and } |S \cap [m_1 - 1] | \leq m_1 - 2. \]
By \thref{up down augment}, $q(m_1;m_2;...;m_l)$ can augment $|S \cap [m_1 - 1]$ to a subset of $[m_1- 1]$ of size $m_1 -  |S \setminus [m_1 - 1] |$.
That is, $q(m_1; m_2 ; ...; m_l)$ can augment $S$ to a subset of $[n]$ of size $m_1$, which is in $F$. 
\end{proof}


Recall that $q(n)$ is merely a special case in $Q(n)$. 
Which family in $Q(n)$, we may ask, is largest? Is it $q(n)$? 
By taking $m_1 > \ceil{n/2}$, we may increase $l$ (i.e. have more terms), but we will suffer from the fact that the largest term is smaller than $\binom{n}{\ceil{n/2}}$.

\begin{lem} \thlabel{stirling}
For integers $k, j$ with $0 \leq j \leq k$, 
\[ \binom{k}{j} \simeq \frac{1}{\sqrt{2 \pi}} \cdot \frac{k^{k + 1/2}}{j^{j + 1/2} (k-j)^{k - j + 1/2}}.\]
\end{lem}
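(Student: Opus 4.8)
The plan is to derive this directly from Stirling's approximation for the factorial, namely
\[ n! \simeq \sqrt{2\pi}\, n^{n+1/2} e^{-n}, \]
applied to each of the three factorials appearing in $\binom{k}{j} = \frac{k!}{j!\,(k-j)!}$. First I would substitute this approximation for $k!$, $j!$, and $(k-j)!$ separately, obtaining
\[ \binom{k}{j} \simeq \frac{\sqrt{2\pi}\, k^{k+1/2} e^{-k}}{\sqrt{2\pi}\, j^{j+1/2} e^{-j} \cdot \sqrt{2\pi}\, (k-j)^{k-j+1/2} e^{-(k-j)}}. \]

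The next step is pure bookkeeping. The three powers of $e$ combine as $e^{-k}/(e^{-j} e^{-(k-j)}) = e^{-k+j+(k-j)} = e^0 = 1$, so the exponential factors cancel completely. The three factors of $\sqrt{2\pi}$ leave a single $1/\sqrt{2\pi}$, since there is one in the numerator and two in the denominator. Collecting the remaining power terms then yields exactly
\[ \binom{k}{j} \simeq \frac{1}{\sqrt{2\pi}} \cdot \frac{k^{k+1/2}}{j^{j+1/2}(k-j)^{k-j+1/2}}, \]
as claimed.

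The only point requiring care — and the closest thing to an obstacle — is the regime in which the asymptotic equivalence $\simeq$ is to be understood. Stirling's formula is an asymptotic statement, so the substitution above is legitimate precisely when all three of $k$, $j$, and $k-j$ tend to infinity; in particular one should exclude (or treat separately) the boundary cases $j = 0$ and $j = k$, where the right-hand side degenerates because $0^{1/2} = 0$ while $\binom{k}{0} = \binom{k}{k} = 1$. For the intended application, where $j$ is comparable to $k/2$, both $j$ and $k-j$ grow linearly in $k$ and the approximation is uniformly valid, so this caveat does not affect the later use of the lemma.
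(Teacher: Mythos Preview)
Your proof is correct and follows exactly the same approach as the paper: apply Stirling's approximation $n! \simeq \sqrt{2\pi}\,n^{n+1/2}e^{-n}$ to each of $k!$, $j!$, and $(k-j)!$, then cancel the exponential factors and the extra $\sqrt{2\pi}$. Your additional remark about the boundary cases $j=0$ and $j=k$ is a nice clarification that the paper's proof omits.
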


\begin{proof}
Stirling's approximation gives 
\[ n! \simeq \sqrt{2 \pi} \cdot n^{n + 1/2}\cdot e^{-n} \]
for positive integers $n$. 
We have 
\[ \binom{k}{j} \simeq \frac{\sqrt{2 \pi}k^{k + 1/2} e^{-k} }{\sqrt{2 \pi}j^{j+1/2} e^{-j}  \sqrt{2 \pi} (k-j)^{k - j+1/2} e^{-(k-j)}}
=\frac{1}{\sqrt{2 \pi}} \cdot \frac{k^{k + 1/2}}{j^{j + 1/2} (k-j)^{k -j + 1/2}}\]
as desired. 
\end{proof}

We can approximate $\binom{n}{\ceil{n/2}}$ by
\[ \frac{1}{\sqrt{2 \pi}} \cdot \frac{n^{n + 1/2}}{(n/2)^{n/2 + 1/2} (n/2)^{n/2 + 1/2}} = \frac{1}{\sqrt{2 \pi}}\cdot \frac{n^{n + 1/2}}{(n/2)^{n+1}} = \sqrt{\frac{2}{\pi}} \cdot \frac{2^n}{n^{1/2}}.\]
So in the sum 
\[ \binom{n}{\ceil{n/2}} + \binom{\ceil{n/2}-1}{\ceil{\frac{\ceil{n/2}-1}{2}}} + \cdots \]
we have 
\[ \frac{\binom{\ceil{n/2}-1}{\ceil{\frac{\ceil{n/2}-1}{2}}}}{\binom{n}{\ceil{n/2}}}
\simeq \frac{2^n}{n^{1/2}} \cdot \frac{(n/2)^{1/2}}{2^{n/2}} = 2^{(n-1)/2}.\]
Thus the term $\binom{n}{\ceil{n/2}}$ is highly dominant. 

Roughly speaking, if a family $F \in Q(n)$ ``deviates too much'' from $q(n)$, each of its terms will be much smaller than $\binom{n}{\ceil{n/2}}$, so $F$ is likely to be smaller than $F(n)$. 
If, on the other hand, $F$ ``looks like''  $q(n)$, then the largest term in $F$ is also highly dominant, but it is smaller than $\binom{n}{\ceil{n/2}}$, so $F$ is also likely to be smaller than $F(n)$. 
This informal argument suggests that $q(n)$ is likely close to being the largest family in $Q(n)$.

\section{Cushioning}

This section is motivated by the following example:
\begin{eg}
\rm{
For $n = 5$, note that the union-free family 
\[ \left ( \binom{[4]}{2} \oplus \{ \emptyset, \{ 5\} \} \right ) \cup \{ \{ 1 \} \} \]
contains
$ 2 \binom{4}{2} + 1 = 13 $
sets, more than the family 
\[ q(5) = \binom{[5]}{3} \cup \binom{[2]}{1},\]
which contains 
$ \binom{5}{3} + \binom{2}{1} = 12$
sets.} 
\end{eg}

Apart from the above ``doubling'' technique, we can also do ``tripling":

\begin{eg}
\rm{The family 
\[ \left (\binom{[3]}{2} \oplus \{ \emptyset, \{ 4 \}, \{ 4, 5 \} \} \right ) \cup \{ \{ 1 \} \} \]
is also union-free, although it is smaller than  $q(5)$, as 
$ 3 \binom{3}{2} + 1 = 10 < 12$.
} \end{eg}

Indeed, we can define a general concept of ``cushioning'' as follows: 

\begin{defn}\thlabel{cushioning def} 
For  positive integers $n, m_1, h_1, m_2, h_2, ..., m_l, h_l$ with
\[ n \geq m_1 + h_1 \geq m_1 > m_2 + h_2 \geq m_2 > ... > m_l + h_l \geq m_l ,\]
and union-free families $F_1 \in UF([n - h_1 + 1, n])$, $F_2 \in UF([m_1 - h_2, m_1 - 1])$, ..., $F_l \in UF([m_{l-1} - h_l, m_{l-1}-1])$,
let 
\[ \begin{split}
& q(n;m_1,h_1,F_1; m_2,h_2,F_2; ... ; m_l,h_l,F_l)  \\
&=  \left ( \binom{[n-h_1]}{m_1} \oplus F_1 \right ) \cup \left ( \binom{[m_1- h_2 -1]}{m_2} \oplus F_2 \right ) \cup \cdots  \cup \left ( \binom{[m_{l-1}-h_l - 1]}{m_l} \oplus F_l \right ).
\end{split} \]
Note that we allow each $F_j$ to contain the empty set $\emptyset$. 
\end{defn}

Note that $q(n; m_1, 0, \{ \emptyset \}; m_2, 0, \{ \emptyset \}; ... ; m_l, 0, \{ \emptyset \} \}$ reduces to $q(n;m_1;m_2;...;m_l)$. 

For example, 
\[ q(5;4,0,\{ \emptyset \}; 1, 2, \{ \emptyset, \{ 2\}, \{ 3\} \}) 
= \binom{[5]}{4} \cup \{ \{ 1\} , \{ 1, 2\}, \{ 1, 3 \} \}.\]

\begin{defn}
For positive integers $n$, let $\widehat Q(n)$ be the set of all
\[ q(n;m_1,h_1,F_1; m_2,h_2,F_2; ... ; m_l,h_l,F_l)\]
satisfying the specifications in \thref{cushioning def}.
\end{defn}

\begin{thm} \thlabel{cushioning free} 
For any positive integer $n$, any family in $\widehat Q(n)$ is union-free. 
\end{thm}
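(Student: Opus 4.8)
The plan is to exploit the layered structure of $F := q(n;m_1,h_1,F_1;\ldots;m_l,h_l,F_l)$ and to analyze any hypothetical union relation by projecting it separately onto the ``main block'' and onto the ``cushion'' of the relevant layer. First I would make the layers uniform by setting $m_0 = n+1$, so that the $j$-th term is $\binom{[m_{j-1}-h_j-1]}{m_j} \oplus F_j$ for every $j$. It is then convenient to name the \emph{main interval} $I_j = [1, m_{j-1}-h_j-1]$ and the \emph{cushion interval} $J_j = [m_{j-1}-h_j, m_{j-1}-1]$. These are disjoint with union $[1,m_{j-1}-1]$ and $|J_j| = h_j$, and every member of the $j$-th term is uniquely $B \cup C$ with $B \in \binom{I_j}{m_j}$ and $C \in F_j \subseteq P(J_j)$, of size $m_j + |C| \in [m_j, m_j+h_j]$.

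Next I would record the two structural facts bought by the inequalities $m_{j-1} > m_j + h_j$. (1) The size ranges $[m_j, m_j+h_j]$ are pairwise disjoint and strictly decreasing in $j$, so the cardinality of a member pins down its layer. (2) For $j' > j$ the entire support $[1, m_{j'-1}-1]$ of a deeper layer lies inside $[1, m_j - 1]$, which is disjoint from $J_j$ since $\min J_j = m_{j-1}-h_j > m_j > m_j - 1$. Thus a member of a strictly deeper layer cannot touch the cushion coordinates of layer $j$.

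Then I would argue by contradiction: suppose $A = \bigcup_i A_i$ with each $A_i \in F$ and $A_i \ne A$, so $A_i \subsetneq A$ and $|A_i| < |A|$. Let $A$ live in layer $p$, $A = B \cup C$ with $B \in \binom{I_p}{m_p}$, $C \in F_p$. Fact (1) forces every $A_i$ into a layer $q_i \ge p$. The heart is to project onto $J_p$ and onto $I_p$. Intersecting with $J_p$ and invoking Fact (2), only the same-layer $A_i$ contribute, giving $C = \bigcup_{q_i = p} C_i$ where $C_i := A_i \cap J_p \in F_p$ and $C_i \subseteq C$. Intersecting with $I_p$, each same-layer piece $B_i := A_i \cap I_p$ satisfies $B_i \subseteq B$ and $|B_i| = m_p = |B|$, hence $B_i = B$; so each same-layer $A_i$ equals $B \cup C_i$, and $A_i \ne A$ forces $C_i \subsetneq C$.

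Finally I would split on whether the cushion $C$ is empty. If $C \ne \emptyset$, then $C = \bigcup_{q_i=p} C_i$ is a nontrivial union of members of $F_p$ all distinct from $C$, contradicting that $F_p$ is union-free — this is exactly where the hypothesis $F_p \in UF(J_p)$ is consumed. If $C = \emptyset$, then $|A| = m_p$ and no same-layer $A_i$ can occur (it would equal $A$), so every $A_i$ is deeper and supported in $[1, m_p-1]$; hence $A = \bigcup_i A_i \subseteq [1, m_p-1]$ gives $|A| \le m_p - 1$, the same support/size contradiction used for $Q(n)$. I expect the main obstacle to be the bookkeeping of the interval inequalities that makes the two projections clean — verifying that deeper layers are invisible on $J_p$ and that the main parts are forced to coincide — rather than any single hard estimate; the cushioning hypothesis $m_{j-1} > m_j + h_j$ is precisely what separates the layers both in size and in support.
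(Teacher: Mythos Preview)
Your proposal is correct and follows essentially the same approach as the paper: rule out shallower layers by cardinality, observe that strictly deeper layers miss the cushion interval $J_p$, force every same-layer $A_i$ to share the main part $B$, and then derive a contradiction from the union-freeness of $F_p$ on the cushion parts. Your write-up is more detailed than the paper's (explicit interval notation $I_j,J_j$, verification of the inequality $\min J_j>m_j$, and separate treatment of the case $C=\emptyset$), but the logical skeleton is identical.
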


\begin{proof}
For any $F \in \widehat Q(n)$, let $F = q(n;m_1,h_1,F_1; m_2,h_2,F_2; ... ; m_l,h_l,F_l)$. 
Suppose for the sake of contradiction that there is some $A \in F$ that is the union of some other subsets $A_1, A_2, ..., A_k \in F$. 
Suppose $A \in \binom{[m_{j-1} - h_j - 1]}{m_j} \oplus F_j$ (taking $m_0 = n + 1$). 
If there is any $A_i \in \binom{[m_{j'-1} - h_{j'} - 1]}{m_{j'}} \oplus F_{j'}$ where $j' < j$, then $|A_i| \geq m_{j'} \geq m_{j-1} > |A|$, a contradiction.
For any $A_i \in \binom{[m_{j'-1} - h_{j'} - 1]}{m_{j'}} \oplus F_{j'}$ where $j' > j$, we have $A_i \cap [m_{j-1} - h_j, m_{j-1}-1] = \emptyset$.
In order that $\left ( \cup_{i=1}^k A_i \right ) \cap  [m_{j-1} - h_j - 1] = A \cap [m_{j-1} - h_j-1]$,
there must be at least one $A_i \in \binom{[m_{j-1} - h_j - 1]}{m_j} \oplus F_j$ with $A_i \cap [m_{j-1} - h_j - 1] = A \cap [m_{j-1}-h_j - 1]$. 
But since $A_i \ne A$, we know for these $A_i$'s that $A_i \cap [m_{j-1} - h_j, m_{j-1}-1] \ne A \cap [m_{j-1} - h_j, m_{j-1}-1]$. 
Since $F_j$ is union-free, it is impossible to have  $\left ( \cup_{i=1}^k A_i \right ) \cap  [m_{j-1} - h_j, m_{j-1}-1] = A \cap [m_{j-1} - h_j, m_{j-1}-1]$, a contradiction. 
\end{proof}

Although the families in $\hat Q(n)$ are union-free, they are not necessarily maximal:

\begin{eg} \thlabel{cushioning not maximal}
\rm{
To the family
\[ 
q(5; 1,3,\{ \emptyset, \{ 3\}, \{ 3, 4\}, \{ 3, 5\}, \{4, 5\} \} ) 
=  \binom{[2]}{1} \oplus \{ \emptyset, \{ 3\}, \{ 3, 4\}, \{ 3, 5\}, \{4, 5\} \} 
\]
we can add the subset $\{ 3, 4, 5 \}$ 
so that the resulting family is still union-free. 
} \end{eg}

In general, is ``cushioning'' useful for getting larger union-free families? 
For now, assume that the maximum size of a  union-free family of subsets of $[n]$ is close to $\binom{n}{\ceil{n/2}}$. 
By \thref{stirling}, we compute that 
\[ \binom{n}{\ceil{n/2}} \simeq \frac{1}{\sqrt{2 \pi}} \cdot \frac{2^{n+1}}{\sqrt n}.\]
Suppose we take a ``cushion'' of ``thickness'' $h$ in the first step.
Considering the largest union-free families of subsets of $[n-t]$ and of $[n-t+1, n]$, and invoking the assumption, we see that the maximum size of the resulting family is roughly 
\[ \frac{1}{\sqrt{2 \pi}} \cdot \frac{2^{n-t +1}}{\sqrt n-t } \cdot \frac{1}{\sqrt{2 \pi}} \cdot \frac{2^{t+1}}{\sqrt t} = \frac{1}{2 \pi} \cdot \frac{2^{n+2}}{\sqrt{t(n-t)}} 
= \frac{1}{\sqrt{2 \pi}} \cdot \frac{2^{n+1}}{\sqrt n} \cdot \frac{1}{\sqrt{2 \pi}} \cdot 2\sqrt{\frac{n}{t(n-t)}}\]
which is smaller than 
$\frac{1}{\sqrt{2 \pi}} \cdot \frac{2^{n+1}}{\sqrt n}$ in general. 
Hence it seems that ``cushioning'' does not help much in getting larger union-free families. 
However, the above argument does not hold if its assumption is false, that is, if we find union-free families of subsets of $[n]$ whose size is substantially larger than $\binom{n}{\ceil{n/2}}$ (which is what ``cushioning'' may enable us to do). 

In the next section we will look at a general theory that can be used to increment the families we have seen so far.

\section{A general theory}

In this section we discuss a general theory that can be used to construct union-free families of subsets.

\begin{thm} \thlabel{general} 
If $F_1, F_2, ..., F_p, G_1, G_2, ..., G_p$ are union-free families of subsets of $[n]$, possibly including the empty set, such that
\begin{itemize} 
\item  $\left ( \bigcup_{j=1}^p \bigcup_{A \in F_j} A \right ) \cap \left ( \bigcup_{j=1}^p \bigcup_{B \in G_j} B \right ) =\emptyset $, and 
\item if $i < j$, then for any $A_1 \in F_i$ and $A_2 \in F_j$ we have $A_1 \subsetneq A_2$, and for any $B_1 \in G_i$ and $B_2 \in G_j$ we have $B_2 \subsetneq B_1$, 
\end{itemize} 
then the family 
\[ \bigcup_{j=1}^p (F_j \oplus G_j)\]
is union-free (and possibly includes the empty set). 
\end{thm}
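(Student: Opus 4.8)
The plan is to exploit the disjointness of the two ``sides'' to decompose every member of the family uniquely into an $F$-part and a $G$-part, and then to use the two chain conditions to prove that any hypothetical union relation cannot straddle two different blocks $F_j \oplus G_j$. This localizes everything to a single block, where only the union-freeness of the two factors remains to be used.

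First I would set $X = \bigcup_{j=1}^p \bigcup_{A \in F_j} A$ and $Y = \bigcup_{j=1}^p \bigcup_{B \in G_j} B$, so that $X \cap Y = \emptyset$ by the first hypothesis. Since every $A \in F_j$ satisfies $A \subseteq X$ and every $B \in G_j$ satisfies $B \subseteq Y$, any member $C = A \cup B$ of $F_j \oplus G_j$ is recovered by $C \cap X = A$ and $C \cap Y = B$; thus the $F$-part and $G$-part of each member are well defined. I would also note that the second hypothesis makes $F_1, \ldots, F_p$ pairwise disjoint as families (and likewise $G_1, \ldots, G_p$), since $A \in F_i \cap F_j$ with $i < j$ would force $A \subsetneq A$. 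Consequently each member of $\bigcup_{j=1}^p (F_j \oplus G_j)$ lies in a \emph{unique} block, namely the one indexed by the $j$ for which its $F$-part lies in $F_j$.

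Next, suppose for contradiction that some member $C = A \cup B$ of a block $F_{j_0} \oplus G_{j_0}$ equals $\bigcup_i C_i$, where each $C_i = A_i \cup B_i \neq C$ is a member of the family lying in block $j_i$. Intersecting the relation $C = \bigcup_i C_i$ with $X$ and with $Y$ yields $A = \bigcup_i A_i$ and $B = \bigcup_i B_i$, and in particular $A_i \subseteq A$ and $B_i \subseteq B$ for every $i$. The chain conditions now come into play: if $j_i > j_0$ then $A \subsetneq A_i$, contradicting $A_i \subseteq A$; if $j_i < j_0$ then $B \subsetneq B_i$, contradicting $B_i \subseteq B$. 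Hence $j_i = j_0$ for all $i$, and the entire union relation takes place inside the single block $F_{j_0} \oplus G_{j_0}$.

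The crux, and the step I expect to be the main obstacle, is this single-block case: deriving a contradiction from $A = \bigcup_i A_i$ (all $A_i \in F_{j_0}$) together with $B = \bigcup_i B_i$ (all $B_i \in G_{j_0}$), under the constraint $A_i \cup B_i \neq A \cup B$. Union-freeness of $F_{j_0}$ forces $A_i = A$ for at least one index, and union-freeness of $G_{j_0}$ forces $B_i = B$ for at least one index; the genuine difficulty is to show these can be taken to be the \emph{same} index, which would give some $C_i = C$ and the desired contradiction. This is precisely where the interaction between the two factors must be controlled, and I would stress-test it on small examples first, since it is the most delicate part of the argument and the point at which any additional structure on the factors (for instance, one of $F_{j_0}, G_{j_0}$ being an antichain, as in the cushioning construction where the first factor is always some $\binom{[\,\cdot\,]}{m_j}$) would have to be invoked.
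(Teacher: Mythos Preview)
Your overall strategy---decompose each member into its $X$-part and $Y$-part, use the two chain conditions to force every term of a putative union relation into the same block $F_{j_0}\oplus G_{j_0}$, and then argue inside that block---is exactly the route the paper takes. The difference is that the paper dispatches the single-block case in one line (``$A_0=\bigcup_r A_r$ and $B_0=\bigcup_r B_r$, both of which are impossible because $F_j$ and $G_j$ are union-free''), whereas you pause and correctly observe that union-freeness of $F_{j_0}$ only yields $A_i=A$ for \emph{some} index and union-freeness of $G_{j_0}$ only yields $B_{i'}=B$ for \emph{some} index, with no guarantee that $i=i'$.

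Your hesitation is justified: this is a genuine gap in the paper's argument, and in fact the statement is false as written. With $p=1$, $F_1=\{\{1\},\{1,2\}\}$ and $G_1=\{\{3\},\{3,4\}\}$ are both union-free and supported on disjoint sets, yet
\[
F_1\oplus G_1=\bigl\{\{1,3\},\{1,3,4\},\{1,2,3\},\{1,2,3,4\}\bigr\}
\]
has $\{1,2,3,4\}=\{1,2,3\}\cup\{1,3,4\}$. Your instinct about the remedy is also right: if for each $j$ at least one of $F_j,G_j$ is an antichain (which does hold in every application the paper makes, since one factor is always a full layer $\binom{[\cdot]}{m_j}$), then in the single-block case $A_i\subseteq A$ with $A_i,A$ in an antichain forces $A_i=A$ for \emph{all} $i$, whence each $B_i\ne B$ and $B=\bigcup_i B_i$ now genuinely contradicts union-freeness of $G_{j_0}$.
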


\begin{proof}
Let $F = \bigcup_{j=1}^p \bigcup_{A \in F_j} A $
and $G = \bigcup_{j=1}^p \bigcup_{B \in G_j} B$.
Then we have $F \cap G = \emptyset$. 

Suppose there is some $j$ and some $A_0 \in F_j$ and $B_0 \in G_j$ such that $S_0 = A_0 \cup B_0$ is the union of some other subsets $S_1, S_2, ..., S_k$ in the above family. 
If $S_1, ... S_k$ are all in $F_j \oplus G_j$, write each $S_r$ as $A_r \cup B_r$ where $A_r \in F_j$ and $B_r \in G_j$. 
Then $A_0 = \bigcup_{r=1}^k A_r$ and $B_0 = \bigcup_{r =1}^k B_r$, both of which are impossible because $F_j$ and $G_j$ are union-free. 

Thus there must be some $S_r$ that is in $F_i \oplus G_i$ for some $i \ne j$. Again, write $S_r = A_r \cup B_r$ where $A_r \in F_i$ and $B_r \in G_i$. 
If $i < j$, then by the second bullet point above, $S_0 \cap G = B_0 \subsetneq B_r = S_r \cap G$, a contradiction (since $S_r$ should be a proper subset of $S_0$).
Similarly, if $i > j$, then $S_0 \cap F = A_0 \subsetneq A_r = S_r \cap  F$, a contradiction.
\end{proof}

Now recall \thref{cushioning not maximal} in the previous section: 
\[ \begin{split}
&q(5; 1,3,\{ \emptyset, \{ 3\}, \{ 3, 4\}, \{ 3, 5\}, \{4, 5\} \} ) \cup \{ \{ 3, 4, 5\} \} \\
&=\left [  \binom{[2]}{1} \oplus \{ \emptyset, \{ 3\}, \{ 3, 4\}, \{ 3, 5\}, \{4, 5\} \} \right ] \cup \{ \{3, 4, 5\} \}.
\end{split}\]
This can be written as $(F_1 \oplus G_1) \cup (F_2 \oplus G_2 )$
where
\[ F_1 = \{ \emptyset \},\]
\[ F_2 = \{ \{ 1\}, \{ 2\} \},\]
\[ G_1 = \{ \{ 3, 4, 5\}\}\]
and
\[ G_2 = \{ \emptyset, \{ 3\}, \{ 3, 4\}, \{3, 5 \}, \{ 4, 5\} \}.\]

\section{Bounds}

In \thref{general}, taking $p = 2$ and $F_2 = G_1 = \{ \emptyset \}$, we obtain the following bound on the largest size $M(n)$ of a union-free family of non-empty subsets of $[n]$:
\[ M(n) + 1 \geq \max_{1 \leq h \leq n-1}  [ M(h)+1] + [ M(n-h)+1],\]
which yields \[ M(n) \geq \max_{1 \leq h \leq n-1}  M(h) + M(n-h)+ 1.\]

Also, by \thref{cushioning free}, we have 
 \begin{align*}
M(n)  \geq & \max \Bigg \{ \binom{n-h_1}{m_1} [M(h_1) + 1] + \binom{m_1 - h_2 - 1}{m_2} [M(h_2) + 1 ] \\
& + \cdots + \binom{m_l - h_l - 1}{m_l} [M(h_l)+1] \Bigg \} ,
\end{align*}
where the maximum is taken over all values satisfying the specifications in \thref{cushioning def}. 

In particular, looking at $q(n)$, we obtain a convenient lower bound: 
\[ M(n) \geq \binom{n}{\ceil{n/2}} \simeq \sqrt{\frac{2}{\pi}} \cdot \frac{2^n}{n^{1/2}}.\]

Next we establish an upper bound on $M(n)$.

\begin{thm} \thlabel{upper 1}
For any positive integers $n, n_1, n_2$ with $n = n_1 + n_2$, 
\[ M(n) \leq M(n_1) + 2^{n_1} \cdot M(n_2).\]
\end{thm}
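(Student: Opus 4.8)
The plan is to fix a decomposition $[n] = [n_1] \sqcup ([n_1+1, n])$, where the second block has size $n_2$, and then to project any union-free family $F$ of non-empty subsets of $[n]$ onto these two coordinates. Concretely, for each $A \in F$ write $A = (A \cap [n_1]) \cup (A \cap [n_1+1, n])$, and group the sets of $F$ according to their ``tail'' $A \cap [n_1+1,n] \subseteq [n_1+1,n]$. Since there are $2^{n_2}$ possible tails, this partitions $F$ into at most $2^{n_2}$ classes; I will want to bound the size of each class and then sum. The factor $2^{n_1}$ in the statement suggests the cleaner grouping is actually by the ``head'' $A \cap [n_1]$, of which there are $2^{n_1}$, so I would group by head and separately control (i) how the empty-head sets behave and (ii) how each fixed nonempty head contributes.

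First I would isolate the sets whose head is empty, i.e. $A \subseteq [n_1+1,n]$. These form a union-free subfamily of non-empty subsets of a ground set of size $n_2$ (after relabelling $[n_1+1,n]$ as $[n_2]$), so by definition there are at most $M(n_2)$ of them. Next, for each of the $2^{n_1}$ possible heads $H \subseteq [n_1]$ (including $H = \emptyset$), I would consider the tails of the sets in $F$ having that head, namely $\mathcal{T}_H = \{\, A \cap [n_1+1,n] : A \in F,\ A \cap [n_1] = H \,\}$. The key claim is that for each \emph{fixed} $H$, the tail family $\mathcal{T}_H$ must be union-free as a family of subsets of $[n_1+1,n]$: if three sets with the same head $H$ satisfied $T_0 = T_1 \cup T_2$ on their tails, then the corresponding full sets would satisfy $A_0 = A_1 \cup A_2$ (the common head unions trivially with itself), contradicting union-freeness of $F$. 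This gives $|\mathcal{T}_H| \le M(n_2) + 1$ when we allow the empty tail, or $M(n_2)$ when tails are required non-empty — and here I must be careful about whether $\emptyset$ is permitted as a tail.

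The heart of the argument is to combine these two viewpoints so that the arithmetic lands exactly on $M(n_1) + 2^{n_1} M(n_2)$ rather than a weaker $2^{n_1}(M(n_2)+1)$ bound. The clean way I would try is: the sets with empty tail $A \cap [n_1+1,n] = \emptyset$ are exactly the subsets of $[n_1]$ lying in $F$, and by the same union-free-projection argument these form a union-free family of non-empty subsets of $[n_1]$, of size at most $M(n_1)$. Every remaining set of $F$ has a \emph{non-empty} tail, and grouping these by their head $H$ (one of $2^{n_1}$ choices) gives at most $M(n_2)$ sets per head, because for each fixed $H$ the non-empty tails form a union-free family of non-empty subsets of a size-$n_2$ ground set. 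Summing yields $|F| \le M(n_1) + 2^{n_1} M(n_2)$, as required. Taking the maximum over all union-free $F$ gives the bound.

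The main obstacle I anticipate is the per-head union-freeness claim, specifically verifying that a union relation among the tails within a single head class genuinely forces a union relation in $F$, and handling the degenerate cases where some $A_i$ equals $A_0$ only on the head but not the tail, or where fewer than the expected number of sets share a head. I would need to check that requiring the tails to be non-empty does not accidentally discard a set of $F$ that \emph{should} be counted, and that a set with empty head but non-empty tail is counted in exactly one place (the empty-head class, controlled by $M(n_2)$) and not double-counted against the $M(n_1)$ term. Getting this bookkeeping to produce precisely the coefficient $2^{n_1}$ on $M(n_2)$ and precisely one additive $M(n_1)$ — as opposed to an off-by-one inflation — is where the care lies.
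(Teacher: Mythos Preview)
Your argument is correct and is essentially the same as the paper's: split $F$ into the sets contained in $[n_1]$ (at most $M(n_1)$ of them, since they form a union-free family of non-empty subsets of $[n_1]$) and the sets with non-empty tail in $[n_1+1,n]$, grouped by their head $H \subseteq [n_1]$ (at most $M(n_2)$ per head, since tails with a fixed head form a union-free family of non-empty subsets of a size-$n_2$ set). The per-head union-freeness claim you flag as the main obstacle is exactly the one-line observation the paper relies on, and your justification of it is sound; the bookkeeping worries you raise all resolve cleanly in the way you describe.
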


\begin{proof}
Suppose we are to construct a union-free family of subsets of $[n]$. 
Among the subsets of $[n]$ that are also subsets of $[n_1]$, at most $M(n_1)$ of them can be chosen. 
As for the subsets of $[n]$ that are not subsets of $[n_1]$, each of them can be uniquely written as $A \cup B$ for some $A \subset [n_1]$ and $B \in [n_1+1, n]$, $B \ne \emptyset$. 
For each particular $A \subset [n_1]$, at most $M(n_2)$ such subsets $A \cup B$ can be chosen. 
Therefore the total number of subsets in the union-free family is at most $M(n_1) + 2^{n_1} \cdot M(n_2)$. 
\end{proof}

An application of this theorem yields the following result: 

\begin{thm}\thlabel{upper 2}
For any positive integers $c$ and $k$, 
\[ M(ck) \leq \left (2^{ck}-1 \right ) \cdot \frac{M(k)}{2^k-1 }.\]
\end{thm}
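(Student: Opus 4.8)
The plan is to obtain this as a clean corollary of \thref{upper 1} by iterating it and recognizing the resulting recurrence as a geometric series. First I would split off one block of $k$ coordinates: writing $ck = k + (c-1)k$ and invoking \thref{upper 1} with $n_1 = k$ and $n_2 = (c-1)k$ gives
\[ M(ck) \leq M(k) + 2^k M((c-1)k). \]
Viewing $M(ck)$ as a function of $c$ with $k$ held fixed, this is a one-step linear recurrence whose homogeneous part has ratio $2^k$.

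Next I would prove the stated bound by induction on $c$. The base case $c = 1$ is the trivial identity $M(k) \leq (2^k - 1)\cdot M(k)/(2^k - 1)$. For the inductive step I would assume
\[ M((c-1)k) \leq \left(2^{(c-1)k} - 1\right)\cdot \frac{M(k)}{2^k - 1} \]
and substitute this into the recurrence above, obtaining
\[ M(ck) \leq M(k) + 2^k\left(2^{(c-1)k} - 1\right)\cdot \frac{M(k)}{2^k - 1}. \]
It then remains to simplify the right-hand side to the claimed closed form.

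The simplification is the routine algebraic heart of the argument and presents no real obstacle: factoring out $M(k)/(2^k - 1)$ leaves the bracket $(2^k - 1) + 2^k(2^{(c-1)k} - 1)$, which collapses to $2^k - 1 + 2^{ck} - 2^k = 2^{ck} - 1$, yielding exactly $M(ck) \leq (2^{ck} - 1)\cdot M(k)/(2^k - 1)$. Equivalently, unrolling the recurrence all the way down gives $M(ck) \leq M(k)\sum_{i=0}^{c-1} 2^{ik} = M(k)\cdot \frac{2^{ck} - 1}{2^k - 1}$ via the finite geometric sum with common ratio $2^k$. Thus all the genuine mathematical content is supplied by \thref{upper 1}; the only care needed here is bookkeeping in the induction and the correct handling of the geometric series.
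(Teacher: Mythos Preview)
Your proof is correct and follows essentially the same approach as the paper: induction on $c$, with the inductive step supplied by \thref{upper 1} and a geometric-series simplification. The only cosmetic difference is that the paper applies \thref{upper 1} with $n_1 = (c-1)k$ and $n_2 = k$ (so the recurrence reads $M(ck) \leq M((c-1)k) + 2^{(c-1)k} M(k)$), whereas you swap the roles and take $n_1 = k$, $n_2 = (c-1)k$; the algebra closes up identically either way.
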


\begin{proof}
We proceed by induction on $c$. When $c = 1$, the statement is trivial.
For $k > 1$, by \thref{upper 1} and the inductive hypothesis, we have
\[ \begin{split}
M(ck) & \leq M((c-1)k) + 2^{(c-1)k} \cdot M(k) \\
& \leq \left ( 2^{(c-1)k }-1  \right ) \cdot \frac{M(k)}{2^k -1} + 2^{(c-1)k} \cdot M(k)\\
& =(2^{ck}-1) \cdot \frac{M(k)}{2^k-1}.\qedhere
\end{split}\]
\end{proof}

Note that \thref{upper 2} gives an upper bound of $M(n)/(2^n-1)$ as a constant, while the bound $M(n) \geq \sqrt{\frac{2}{\pi}} \cdot \frac{2^n}{n^{1/2}}$ gives $M(n)/(2^n-1)$ as a constant multiple of $n^{-1/2}$. 
Hence the upper bound is only a polynomial multiple of the lower bound. 

Since the number of possible families of nonempty subsets of $[n]$ is $2^{2^n-1}$ which grows at a double exponential rate, it is difficult to find $M(n)$ by direct searching. 
However, with the help of the above results and computer programming, we can  establish some bounds on $M(n)$ for particular values of~$n$. 

In the following table, L.B. stands for lower bound and U.B. stands for upper bound. In the proofs of the upper bounds, when we state the values of $n_1$ and $n_2$, we mean that  the proof is by applying \thref{upper 1} with those values of $n_1$ and $n_2$. Details of the proof (by exhaustion) of the upper bound for $n = 4$ is given in the Appendix. 

{\small
\begin{center}
\renewcommand{\arraystretch}{2}
\begin{longtable}{| c || c | c || c | c | | c |}
\hline
$n$ & L.B. & Example & U.B. & Proof  & $\frac{\text{U.B.}}{\text{L.B.}}$\\ \hline
\endhead
1 & 1 & $q(1) = \binom{[1]}{1} $ & 1 & By exhaustion & 1.00\\ \hline 
2 & 2 & $q(2) = \binom{[2]}{1} $& 2 & By exhaustion & 1.00 \\ \hline
3 &  4& $q(3) = \binom{[3]}{2} \cup \binom{[1]}{1}$& 4& By exhaustion &1.00\\ \hline
4 & 7 & $q(4) = \binom{[4]}{2} \cup \binom{[1]}{1}$ & 7 &  By exhaustion &  1.00\\ \hline
5 & 13 & $q(5; 2, 1, \{ \emptyset, \{ 5 \} \}; 1, 0, \{ \emptyset \} )$ & 15 & $n_1 = 1, n_2 = 4$ & 1.15\\ \hline
6 & 22 & $q(6) = \binom{[6]}{3} \cup \binom{[3]}{2} \binom{[1]}{1}$ & 30 & $n_1 = 2 , n_2 = 4$ & 1.36 \\ \hline
7 & 39 & $q(7) = \binom{[7]}{4} \cup \binom{[3]}{2} \cup \binom{[1]}{1}$ & 60& $n_1=3,n_2=4$ &1.54 \\ \hline
8 & 74 & $q(8) = \binom{[8]}{4} \cup \binom{[3]}{2} \cup \binom{[1]}{1}$ & 119 & $n_1=4,n_2=4$ & 1.61 \\ \hline
9 & 133& $q(9) = \binom{[9]}{5} \cup \binom{[4]}{2}\cup \binom{[1]}{1}$ &  239 & $n_1=1,n_2=8$& 1.80 \\ \hline
10 & 259 & $q(10) = \binom{[10]}{5} \cup \binom{[4]}{2} \cup \binom{[1]}{1}$& 478& $n_1=2,n_2=8 $ &1.85 \\ \hline
11 & 474 & $q(11) = \binom{[11]}{6} \cup \binom{[5]}{3} \cup \binom{[2]}{1}$ &  956& $ n_1=3,n_2=8$& 2.02
\\ \hline
12 &  936 & $q(12) = \binom{[12]}{6} \cup \binom{[5]}{3} \cup \binom{[2]}{1}$ & 1911 & $n_1=4,n_2=8$ & 2.04 \\ \hline
13 & 1738 & $q(13) = \binom{[13]}{7} \cup \binom{[6]}{3} \cup \binom{[2]}{1}$ & 3823 & $n_1=1,n_2=12$ & 2.20 \\ \hline
14 &3454 & $q(14)= \binom{[14]}{7} \cup \binom{[6]}{3} \cup \binom{[2]}{1}$ & 7646&  $n_1=2,n_2=12$ & 2.21 \\ \hline
15 & 6474 & $q(15) = \binom{[15]}{8} \cup \binom{[7]}{4} \cup \binom{[3]}{2} \cup \binom{[1]}{1}$ &15292 & $n_1=3,n_2=12$ & 2.36 \\ \hline
16 & 12909 & $q(16) = \binom{[16]}{8} \cup \binom{[7]}{4} \cup \binom{[3]}{2} \cup \binom{[1]}{1}$ & 30583& $n_1=4,n_2=12$ &  2.37 \\ \hline
17 & 24384 & $q(17) = \binom{[17]}{9} \cup \binom{[8]}{4} \cup \binom{[3]}{2} \cup \binom{[1]}{1}$ &61167 & $n_1=1,n_2=16$ &  2.51 \\ \hline
18 & 48694 & $q(18) = \binom{[18]}{9} \cup \binom{[9]}{5} \cup \binom{[4]}{2}\cup \binom{[1]}{1}$ & 122334 & $n_1=2,n_2=16$ &  2.51 \\ \hline
19 & 92511 & $q(19) = \binom{[19]}{10} \cup \binom{[9]}{5} \cup \binom{[4]}{2}\cup \binom{[1]}{1}$ &244668 &$n_1=3,n_2=16$& 2.64 \\ \hline
20 & 184889& $q(20) = \binom{[20]}{10} \cup \binom{[9]}{5} \cup \binom{[4]}{2}\cup \binom{[1]}{1}$ & 489335 & $n_1=4,n_2=16$ & 2.65 \\ \hline
21 & 352975 & $q(21) = \binom{[21]}{11} \cup \binom{[10]}{5} \cup \binom{[4]}{2} \cup \binom{[1]}{1}$ & 978671 & $n_1=1,n_2=20$ & 2.77 \\ \hline
22 & 705691 & $q(22) = \binom{[22]}{11} \cup \binom{[10]}{5} \cup \binom{[4]}{2} \cup \binom{[1]}{1}$& 1957342 &$n_1=2,n_2=20$ & 2.77 \\ \hline
23 & 1352552 & $q(23) = \binom{[23]}{12} \cup \binom{[11]}{6} \cup \binom{[5]}{3} \cup \binom{[2]}{1}$ &3914684 &$n_1=3,n_2=20$ &2.89 \\ \hline
24 & 2704630& $q(24) = \binom{[24]}{12} \cup \binom{[11]}{6} \cup \binom{[5]}{3} \cup \binom{[2]}{1}$ & 7829367 &$n_1=4,n_2=20$ & 2.89 \\ \hline
25 & 5201236 & $q(25) = \binom{[25]}{13} \cup \binom{[12]}{6} \cup \binom{[5]}{3} \cup \binom{[2]}{1}$ &15658735 &$n_1=1,n_2=24$ & 3.01 \\ \hline
26 & 10401536 &$q(26) = \binom{[26]}{13} \cup \binom{[12]}{6} \cup \binom{[5]}{3} \cup \binom{[2]}{1}$ & 31317470 & $n_1=2,n_2=24$ & 3.01 \\ \hline
27 & 20060038 &$q(27) = \binom{[27]}{14} \cup \binom{[13]}{7} \cup \binom{[6]}{3} \cup \binom{[2]}{1}$ &  62634940 &$n_1=3,n_2=24$ & 3.12 \\ \hline
28 & 40118338 & $q(28) = \binom{[28]}{14} \cup \binom{[13]}{7} \cup \binom{[6]}{3} \cup \binom{[2]}{1}$ & 125269879 &$ n_1=4,n_2=24$ & 3.12 \\ \hline
29 & 77562214 & $q(29) = \binom{[29]}{15} \cup \binom{[14]}{7} \cup \binom{[6]}{3} \cup \binom{[2]}{1}$ & 250539759 &$n_1=1,n_2=28$ & 3.23 \\ \hline
30 & 155120974 & $q(30) = \binom{[30]}{15} \cup \binom{[14]}{7} \cup \binom{[6]}{3} \cup \binom{[2]}{1}$ & 501079518 &$n_1=2,n_2=28$ & 3.23 \\ \hline
\end{longtable}
\end{center}
}

We remark that from each of the union-free families of subsets of $[n]$ we have constructed, we can obtain other union-free families by permuting the elements of $[n]$. 

We also observe that in the table above, for integers $k \geq 5$, the U.B./L.B. ratios for $M(2k)$ and $M(2k-1)$ are very close.

To get an idea of how powerful such filibustering is, suppose it takes one minute to handle each amendment (e.g. a roll call vote can be requested for every amendment). Then a filibuster on even a simple bill with 30 articles will take at least 155120974 minutes, i.e. about 300 years, if no countermeasure is taken. 

Finally, we propose some conjectures: 

\begin{conj} \thlabel{big o conj}
We have  $M(n) = O(|q(n)|)$. 
\end{conj}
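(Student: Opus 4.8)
The plan is to reduce the conjecture to a single clean weighted inequality and then to attack that inequality directly. Since Lemma~\thref{stirling} and the computation following it show that the middle term $\binom{n}{\ceil{n/2}}$ dominates the sum defining $|q(n)|$, we have $|q(n)| \simeq \binom{n}{\ceil{n/2}}$, so it suffices to produce an absolute constant $C$ with $M(n) \le C\binom{n}{\ceil{n/2}}$ for all $n$.

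First I would set up a Lubell-type weighting. For a family $F$ write $\Lambda(F) = \sum_{A \in F} 1/\binom{n}{|A|}$, the quantity appearing in the LYM inequality. The elementary observation is that $\binom{n}{k} \le \binom{n}{\ceil{n/2}}$ for every $0 \le k \le n$, so each weight satisfies $1/\binom{n}{|A|} \ge 1/\binom{n}{\ceil{n/2}}$. Summing over $A \in F$ gives
\[ |F| \le \binom{n}{\ceil{n/2}}\,\Lambda(F), \]
whence $M(n) \le \binom{n}{\ceil{n/2}} \cdot \sup\{\Lambda(F) : F \text{ union-free}\}$. Thus the conjecture would follow at once from the single clean claim that the LYM sum of a union-free family is uniformly bounded: there is an absolute constant $C$ with $\Lambda(F) \le C$ for every union-free $F \subseteq P([n])$ and every $n$. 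This target is consistent with the known cases: an antichain has $\Lambda(F) \le 1$ by the LYM inequality, while a maximal chain (which is union-free, since the union of any subcollection of a chain is its largest member) has $\Lambda(F) = \sum_{k} 1/\binom{n}{k} = O(1)$.

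To prove boundedness of $\Lambda(F)$ I would interpret it probabilistically. If $\mathcal C$ denotes a uniformly random maximal chain $\emptyset = C_0 \subsetneq C_1 \subsetneq \cdots \subsetneq C_n = [n]$, then $\Lambda(F) = \mathbb{E}\,|F \cap \mathcal C|$ is exactly the expected number of members of $F$ lying on $\mathcal C$. Because any subfamily of a union-free family is union-free, one may treat the members of size $\le n/2$ and those of size $> n/2$ separately, each being a union-free family in its own right, and aim to bound the expected number of hits in each regime by an absolute constant. The goal is then to show that, although a single chain may meet $F$ in many sets, the union-free condition forces the measure of chains meeting $F$ often to decay fast enough that the average stays bounded.

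The hard part -- and, I expect, essentially the whole content of the conjecture -- is precisely this last step. The naive LYM proof breaks down because union-freeness imposes \emph{no} constraint along a single chain: a chain is itself a union-free family, so $|F \cap \mathcal C|$ may be as large as $n+1$ and no bound on the per-chain count is available. Any successful argument must therefore be genuinely global, exploiting the fact that union relations between sets lying on \emph{different} chains limit how many chains can be heavily loaded. Two routes seem worth pursuing. One is an entropy/compression argument: encode a uniformly random member of $F$ by revealing its coordinates in a random order and show that union-freeness yields a saving of $\tfrac12\log_2 n + O(1)$ bits over the trivial $n$ bits, which is exactly the Sperner-type saving reflected in the $\sqrt n$ factor. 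The other is to seek a charging scheme or a normalized-matching (two-step Lubell) inequality adapted to union-free families, causing the layer-by-layer contributions to telescope as they do for a single chain. I would regard establishing $\Lambda(F) = O(1)$ as the essential obstacle; the reduction above shows it is not merely sufficient but is the natural crux into which the entire conjecture collapses.
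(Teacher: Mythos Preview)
The statement you are addressing is not a theorem in the paper but an open conjecture: the paper explicitly lists it among ``conjectures we propose'' and offers no proof. There is therefore no paper proof to compare against; your submission is an attempted attack on an open problem.

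As a proof, your proposal has a genuine gap, and you identify it yourself: everything hinges on the claim that $\Lambda(F)=\sum_{A\in F}\binom{n}{|A|}^{-1}$ is bounded by an absolute constant over all union-free $F$, and this claim is not established. The reduction $|F|\le \binom{n}{\ceil{n/2}}\Lambda(F)$ is correct but essentially tautological (it uses only that the middle binomial coefficient is largest), so no real progress has been made---the unproved target $\Lambda(F)=O(1)$ is at least as hard as the original conjecture, and quite possibly strictly stronger, since a bound $M(n)\le C\binom{n}{\ceil{n/2}}$ does not obviously control the layer-weighted sum $\Lambda$. Your observation that a single maximal chain is itself union-free, so that $|F\cap\mathcal C|$ admits no nontrivial pointwise bound, is exactly the obstruction that makes the Lubell averaging argument fail here, and neither of your suggested workarounds (an entropy saving of $\tfrac12\log_2 n$ bits, or a two-step normalized-matching inequality) is developed beyond naming the technique. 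In short: the reformulation is clean and the diagnosis of the difficulty is accurate, but nothing here closes the gap, and the conjecture remains open both in the paper and in your write-up.
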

This conjecture suggests that the lower bound $q(n)$ for $M(n)$ is tight up to a constant multiple. 

\begin{conj} \thlabel{suff large conj 1}
For any positive integer $N$, there exists a positive integer $n > N$ such that $M(n) = |q(n)|$. 
\end{conj}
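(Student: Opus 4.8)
The lower bound $M(n)\ge |q(n)|$ is already in hand, since every $q(n)$ is union-free, so the entire force of the conjecture is the matching upper bound $M(n)\le |q(n)|$, and it is required only for infinitely many $n$ rather than for all $n$. My plan has three parts: (a) isolate a correct infinite subsequence of $n$; (b) prove the upper bound there by strong induction along that subsequence, using the recursion
\[ |q(n)| = \binom{n}{\ceil{n/2}} + \left| q\!\left(\ceil{n/2}-1\right)\right|, \]
which is immediate from the definition of $q(n)$ (and is confirmed term-by-term by the table, e.g. $|q(14)| = \binom{14}{7} + |q(6)|$); and (c) verify that no family of the general shape permitted by \thref{general} can beat $|q(n)|$ on that subsequence.

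Identifying the subsequence is itself a substantive first step, and the naive guesses fail. The reason $n=5$ is non-optimal is arithmetic: replacing the middle layer $\binom{[g]}{\ceil{g/2}}$ of a ground $[g]$ by the ``doubled'' family $\binom{[g-1]}{\ceil{(g-1)/2}} \oplus \{\emptyset,\{g\}\}$ yields $2\binom{g-1}{\ceil{(g-1)/2}}$ sets, and a one-line computation gives $2\binom{g-1}{\ceil{(g-1)/2}} = \binom{g}{g/2}$ for even $g$ but $2\binom{g-1}{\ceil{(g-1)/2}} > \binom{g}{\ceil{g/2}}$ for odd $g$. Thus doubling exactly ties on even grounds but strictly enlarges the dominant layer on every odd ground; at $n=5$ the resulting family already has $13 > 12 = |q(5)|$ members. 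Since $q(n)$ recurses along the grounds $[n],[\ceil{n/2}-1],\dots$ under the ground map $g\mapsto \ceil{g/2}-1$, a natural infinite candidate family is $n = 2^k-2$, for which every ground in the recursion, namely $2^k-2,\,2^{k-1}-2,\dots,2$, is even and the middle layer sits at each level. Note that powers of two are \emph{excluded}: their recursion eventually lands on the odd ground $2^{k-1}-1\ge 5$, whose tail the doubling construction strictly improves while preserving global union-freeness, so $M(2^k) > |q(2^k)|$. My target therefore becomes $M(2^k-2) = |q(2^k-2)|$, consistent with the lower-bound column at $n=6,14,30$.

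For the upper bound on $n = 2^k-2$ I would run strong induction along this subsequence. Given a maximum union-free family $F$ on $[n]$, split it at the middle level $m=n/2=2^{k-1}-1$ into a heavy part (members of size $\ge m$) and a light part. The engine replacing the lossy factor $2^{n_1}$ of \thref{upper 1} would be the characterization that $F$ is union-free if and only if each $A\in F$ carries a \emph{private} element $x_A\in A$ lying in no member $B\in F$ with $B\subsetneq A$; combined with a chain-counting (LYM-type) weighting this should cap the heavy part at $\binom{n}{m}$ and force the light part to reduce, after a shifting/compression, to a union-free family on an effective ground of size $m-1 = 2^{k-1}-2$, whence at most $|q(2^{k-1}-2)|$ by induction. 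The natural device is a shifting argument: push every maximal union-free family toward $q(n)$ without decreasing its size, with the even-ground arithmetic guaranteeing that each elementary shift is non-strict, so that equality is attained exactly at $q(n)$ up to permutation.

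The hard part will be twofold. First, confining the light part to an effective ground of size $m-1$ is simply false for the light sets in isolation, since in an arbitrary family they roam over all of $[n]$; the reduction must instead be extracted from the heavy/light \emph{interaction} forced by union-freeness, and it is precisely here that the even-ground condition must block any cross-level gain. Converting the informal dominance heuristic following \thref{stirling} into this rigorous confinement is the crux. Second, because the conjecture asserts genuine optimality and not merely optimality within $\widehat Q(n)$, the argument must exclude every family arising from \thref{general} — tripling-type and higher cushions, and asymmetric choices of the families $F_j, G_j$ — and no existing bound is sharp enough, since both \thref{upper 1} and \thref{upper 2} lose polynomial factors. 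I expect the decisive obstacle to be exactly this: proving that on the self-similar grounds $2^k-2$ no cross-layer interaction can beat the break-even value, which will require a genuinely new extremal idea (most plausibly a shifting or entropy argument tailored to the recursive structure of $q(2^k-2)$) rather than any refinement of the counting bounds already established.
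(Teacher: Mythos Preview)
The statement you are attempting to prove is presented in the paper as an open \emph{conjecture}; the paper offers no proof, only the one-line gloss ``This conjecture suggests that there exist arbitrarily large positive integers $n$ for which $M(n) = |q(n)|$'' and the remark that it would follow from the stronger \thref{suff large conj 2}. There is therefore no proof in the paper to compare your proposal against.

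Your submission is also, by your own description, not a proof but a three-part plan whose central step you concede is missing: you write that bounding the ``light'' part by $|q(2^{k-1}-2)|$ ``will require a genuinely new extremal idea (most plausibly a shifting or entropy argument\dots) rather than any refinement of the counting bounds already established.'' That is an honest assessment, but it means the proposal contains a genuine gap at exactly the point where the real difficulty lies. The heuristic that doubling merely ties on even grounds rules out one specific competitor to $q(n)$; it does nothing to exclude the full range of families allowed by \thref{general}, and the only upper bounds currently available (\thref{upper 1} and \thref{upper 2}) are off by the polynomial factor you yourself note. Identifying the subsequence $n=2^k-2$ is a reasonable first guess, but nothing in the paper or in your outline establishes even $M(6)=22$, let alone the inductive step, so at present the conjecture remains entirely open.
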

This conjecture suggests that there exists arbitrarily large positive integers $n$ for which $M(n) = |q(n)|$. 

\begin{conj} \thlabel{suff large conj 2}
There exists a positive integer $N$ such that $M(n) = |q(n)|$ for all positive integers $n> N$.
\end{conj}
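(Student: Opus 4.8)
Since the family $q(n)$ is union-free, we already have $M(n)\ge |q(n)|$, so the entire content of the statement is the matching upper bound $M(n)\le |q(n)|$ for all sufficiently large $n$. The plan is to prove this by strong induction on $n$, exploiting the recursive identity
\[ |q(n)| = \binom{n}{\ceil{n/2}} + |q(\ceil{n/2}-1)|, \]
which is immediate from the definition of $q(n)$. The target inductive inequality is therefore
\[ M(n) \le \binom{n}{\ceil{n/2}} + M(\ceil{n/2}-1), \]
for once this is established, the inductive hypothesis $M(m)=|q(m)|$ for $m<n$ gives $M(n)\le \binom{n}{\ceil{n/2}}+|q(\ceil{n/2}-1)|=|q(n)|$, closing the induction.

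To attack this inequality I would first extract structural information about an arbitrary maximum union-free family $F$ on $[n]$. A convenient reformulation is that $F$ is union-free if and only if every $A\in F$ has a \emph{private element} $x_A\in A$ lying in no $B\in F$ with $B\subsetneq A$: if no private element existed, the members of $F$ properly contained in $A$ would already cover $A$ and exhibit it as a union, while conversely a private element obstructs any representation of $A$ as a union of other members. Choosing one private element per set and grouping gives $F=\bigsqcup_{x\in[n]}F_x$ with $F_x=\{A\in F: x_A=x\}$, and each $F_x$ is an antichain all of whose members contain $x$ (two nested members of $F_x$ would contradict privateness of the shared label $x$). Applying the LYM inequality in the link of each $x$ and summing yields the weighted bound $\sum_{A\in F}\frac{1}{|A|\binom{n}{|A|}}\le 1$; unfortunately this is far too weak on its own, since it is also satisfied by unions of several middle layers that are visibly \emph{not} union-free. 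I would therefore combine the private-element partition with the decomposition on the last coordinate, $F_0=\{A\in F:n\notin A\}$ and $F_1=\{A\setminus\{n\}:A\in F,\ n\in A\}$ (both union-free families on $[n-1]$), and try to extract a sharp inequality from the way union-freeness \emph{couples} $F_0$ and $F_1$.

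The main obstacle is precisely this coupling. The crude recursion $M(n)\le M(n_1)+2^{n_1}M(n_2)$ of \thref{upper 1} loses a factor of order $\sqrt n$ per level because it treats the $2^{n_1}$ possible traces on $[n_1]$ as independent, whereas union-freeness severely limits how many traces can simultaneously support large trace-families. What is needed is a recursion whose leading term is exactly $\binom{n}{\ceil{n/2}}$, i.e. a proof that the ``top'' of an extremal family occupies essentially one middle layer while everything else is confined to a universe of size $\ceil{n/2}-1$. I expect this to require a compression (shifting) argument: one would introduce left-compressions $S_{ij}$, verify that union-freeness is preserved or can be repaired without loss of size, and reduce to analyzing highly structured compressed families for which the layer-by-layer accounting can be made exact. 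A natural companion, in the spirit of the uniqueness clause of Sperner's theorem quoted earlier, is a stability statement asserting that any union-free family of size close to $|q(n)|$ must essentially be a $q$-type family; such uniqueness is usually what powers an exact rather than merely asymptotic extremal result.

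I expect the genuine difficulty to lie entirely in this sharpened recursion. Even the much weaker \thref{big o conj}, asserting only $M(n)=O(|q(n)|)$, is not delivered by \thref{upper 1} and \thref{upper 2}, since the tabulated ratio of upper to lower bound drifts steadily upward with $n$; any argument strong enough to force exact equality for large $n$ would, as a byproduct, settle the $O(|q(n)|)$ bound as well. Conversely, I would treat a recursion of the form $M(n)\le \binom{n}{\ceil{n/2}}+O\!\left(M(\ceil{n/2}-1)\right)$ as the essential first milestone, after which tightening the implied constant to $1$ for large $n$, via the compression and stability analysis above, would complete the proof.
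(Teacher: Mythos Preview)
The statement you are addressing is \thref{suff large conj 2}, which the paper presents as an \emph{open conjecture}; there is no proof in the paper to compare against. The paper merely records that it implies \thref{big o conj} and \thref{suff large conj 1}, and the table of bounds (with the ratio of upper to lower bound drifting past $3$) makes clear that even the weaker $M(n)=O(|q(n)|)$ is out of reach of the methods developed there.

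Your proposal is candid about being a plan rather than a proof. The private-element reformulation and the resulting weighted inequality $\sum_{A\in F}\frac{1}{|A|\binom{n}{|A|}}\le 1$ are correct, but, as you acknowledge, this does not even recover the upper bound of \thref{upper 2}; and the decisive step, the sharp recursion $M(n)\le \binom{n}{\ceil{n/2}}+M(\ceil{n/2}-1)$, is stated as a target rather than established.

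There is, moreover, a concrete obstruction to the route you sketch: left-compression does \emph{not} preserve union-freeness in the sense of this paper. Take
\[ F=\bigl\{\{1\},\ \{3\},\ \{2,3\},\ \{1,2\}\bigr\}, \]
which is union-free (neither two-element set can be covered by the others without picking up an extra element). The shift $S_{12}$ sends $\{2,3\}$ to $\{1,3\}$ and fixes the other three sets, giving
\[ S_{12}(F)=\bigl\{\{1\},\ \{3\},\ \{1,3\},\ \{1,2\}\bigr\}, \]
in which $\{1,3\}=\{1\}\cup\{3\}$; the shifted family is no longer union-free. So the step ``verify that union-freeness is preserved or can be repaired without loss of size'' fails as stated, and any compression-based attack would need a genuinely new idea to control this damage. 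Absent that, the proposal does not go beyond the heuristics already implicit in the paper, and the conjecture remains open.
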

This conjecture suggests that the lower bound $q(n)$ for $M(n)$  is tight for all sufficiently large $n$. 

Note that \thref{suff large conj 2} implies \thref{big o conj} and \thref{suff large conj 1}.

\section*{Appendix}

Here we prove that a union-free family of subsets of $[4]$ has size at most 7. Let  $F$ be such a family.  There are a number of cases: 

Case 1: $\{1, 2, 3, 4\}$ is in  $F$. Then there is at most one subset of size 3 in $F$. 

Case 1.1: There is one subset of size 3 in $F$. WLOG let it be $\{ 1, 2, 3\}$. Then $\{ 1, 4\}$, $\{ 2, 4\}$ and $\{ 3, 4\}$ cannot be in $F$. Among the remaining subsets, at most one of $\{1, 2\}$ and $\{3 \}$ can be in $F$, and at most one of $\{1, 3\}$ and $\{2 \}$ can be in $F$. Hence, there are at most 7 subsets in $F$. 

Case 1.2: There is no subset of size 3 in $F$. 

Case 1.2.1: There is at least one subset of size 2 in $F$. WLOG let it be $\{ 1, 2\}$. Then $\{ 3, 4\}$ cannot be in $F$, at most one of $\{ 1\}$ and $\{ 2\}$ can be in $F$, at most one of $\{ 3 \}$ and $\{ 4\}$ can be in $F$, and at most one of $\{ 1,4 \}$ and $\{ 2, 3\}$ can be in $F$. Hence there are at most 7 subsets in $F$. 

Case 1.2.2: There is no subset of size 2 in $F$. Then there are at most 5 subsets in $F$. 

Case 2: $\{ 1, 2, 3, 4\}$ is not in $F$. 

Case 2.1: All subsets of size 3 are in $F$. 

Case 2.1.1: There is at least one subset of size 2 in $F$. WLOG let it be $\{1, 2\}$. Then $\{ 1, 3\}$, $\{ 1, 4\}$, $\{ 2, 3\}$ and $\{ 2, 4\}$  cannot be in $F$. If $\{ 3, 4\}$ is in $F$, then at most one subset of size 1 can be in $F$. If $\{ 3, 4\}$ is not in $F$, then at most 2 subsets of size 1 can be in $F$. In either case, there are at most 7 subsets in $F$. 

Case 2.1.2: There is no subset of size 2 in $F$. Then as there are at most 2 subsets of size 1 in $F$, it follows that there are at most 6 subsets in $F$. 

Case 2.2: Exactly 3 subsets of size 3 are in $F$. WLOG let them be $\{ 1, 2, 3\}$, $\{ 1, 2, 4\}$ and $\{ 1, 3, 4\}$. 

Case 2.2.1: At least one subset of size 2 is in $F$.

Case 2.2.1.1: A subset of size 2 that contains the element 1 is in $F$. WLOG let it be $\{ 1, 2\}$. Then $\{ 1, 3\}$, $\{ 1, 4\}$, $\{ 2, 3\}$ and $\{ 2, 4\}$  cannot be in $F$. If $\{ 3, 4\}$ is in $F$, then at most 2 subsets of size 1 can be in $F$. If $\{ 3, 4\}$ is not in $F$, then at most 3 subsets of size 1 can be in $F$. In either case, there are at most 7 elements in $F$. 

Case 2.2.1.2: No subset of size 2 that contains the element 1 is in $F$, but a subset of size 2 that does not contain the element 1 is in $F$. WLOG suppose $\{ 2, 3\}$ is in $F$. Then $\{ 1\}$ cannot be in $F$, and it is easy to check that no 4 of the remaining subsets can be in $F$. Hence there are at most 7 subsets in $F$. 

Case 2.2.2: No subset of size 2 is in $F$. Then there are at most 7 subsets in $F$. 

Case 2.3: Exactly 2 subsets of size 3 are in $F$. WLOG let them be $\{ 1, 2, 3\}$ and $\{ 1, 2, 4\}$. 

Case 2.3.1: $\{ 1, 2\}$ is in $F$. Then $\{1, 3\}$, $\{1, 4\}$, $\{ 2, 3\}$ and $\{ 2, 4\}$ cannot be in $F$. Also, at most one of $\{1 \}$ and $\{ 2\}$ can be in $F$. Hence there are at most 7 subsets in $F$. 

Case 2.3.2: $\{ 1, 2\}$ is not in $F$. Note that at most one of $\{ 1, 3\}$ and $\{ 2, 3\}$ can be in $F$, and at most one of $\{ 1, 4\}$ and $\{ 2, 4\}$ can be in $F$. After checking all possibilities, we find that there are at most 7 subsets in $F$. 

Case 2.4: Exactly 1 subset of size 3 is in $F$. WLOG let it be $\{ 1, 2, 3\}$. 

Case 2.4.1: One of $\{ 1, 2\}$, $\{ 1, 3\}$ and $\{ 2, 3\}$ is in $F$. WLOG assume $\{ 1, 2\}$ is in $F$. Then $\{ 1, 3\}$ and $\{ 2, 3\}$ cannot be in $F$. Note that at most one of $\{ 1\}$ and $\{ 2\}$ can be in $F$, and at most two of $\{ 3, 4\}$, $\{3\}$ and $\{4 \}$ can be in $F$. Hence there are at most 7 subsets in $F$. 

Case 2.4.2: None of  $\{ 1, 2\}$, $\{ 1, 3\}$ and $\{ 2, 3\}$ is in $F$. Clearly at most two of $\{1\}$, $\{2\}$ and $\{3\}$ can be in $F$. Hence there are at most 7 subsets in $F$. 

Case 2.5: No subset of size 3 is in $F$. 

Case 2.5.1: All 4 subsets of size 1 are in $F$. Then there can be no subset of size 2 in $F$ and so there are only 4 subsets in $F$.

Case 2.5.2: Exactly 3 subsets of size 1 are in $F$. WLOG let them be $\{ 1\}$, $\{2\}$ and $\{ 3\}$. Then the only subsets of size 2 that can be in $F$ are $\{ 1, 4\}$, $\{ 2, 4\}$ and $\{3, 4\}$. Hence there are at most 6 subsets in $F$. 

Case 2.5.3: Exactly 2 subsets of size 1 are in $F$. WLOG let them be $\{ 1\}$ and $\{ 2\}$. Then $\{ 1, 2\}$ cannot be in $F$. Hence there are at most 7 subsets in $F$. 

Case 2.5.4: Exactly 1 or 0 subset of size 1 is in $F$. Then there are at most 7 subsets in $F$. 

In conclusion, the size of the family $F$ cannot exceed 7.

\section*{Acknowledgements}
The author is deeply indebted to Professor Richard Ehrenborg for his help with this \mbox{paper}.

\section*{About the Student Author}

Andy Loo is an undergraduate student in Princeton University's Class of 2016. His research interests include number theory, combinatorics, and game theory. 

\section*{Press Summary}

This paper discusses the question of how many non-empty subsets of a given finite set we can choose so that no chosen subset is the union of some other chosen subsets.
This question informs the issue of how many amendments to a bill a legislator can propose in order to filibuster against it. 
It is found that a bill with as few as 30 articles can theoretically face a filibuster that lasts about 300 years. 

\end{document}